\newtheorem{tw}{Theorem}[section]
\newtheorem{q}[tw]{Question}
\newtheorem{prop}[tw]{Proposition}
\newtheorem{lem}[tw]{Lemma}
\newtheorem{cor}[tw]{Corollary}
\theoremstyle{definition}
\newtheorem{deff}[tw]{Definition}
\begin{document}

\title{APD profiles and transfinite asymptotic dimension}
\author{Kamil Orzechowski\\
Faculty of Mathematics and Natural Sciences\\
University of Rzesz\'ow\\
35-959 Rzesz\'ow, Poland\\
E-mail: kamil.orz@gmail.com}

\maketitle

\begin{abstract}
We develop the theory of APD profiles introduced by J. Dydak for $\infty$-pseudometric spaces (\cite{Dydak}). We connect them with transfinite asymptotic dimension defined by T. Radul (\cite{Rad}). We give a characterization of spaces with transfinite asymptotic dimension at most $\omega+n$ for $n\in\omega$ and a sufficient condition for a space to have transfinite asymptotic dimension at most $m\cdot \omega+n$ for $m,n\in\omega$, using the language of APD profiles.

\vspace{5mm}

\noindent \textbf{Keywords}: asymptotic dimension, asymptotic property C, APD profile.

\vspace{5mm}

\noindent \textbf{2010 Mathematics Subject Classification}: 54F45.
\end{abstract}

\section{The set-theoretical background}

We begin with a set-theoretical definition due to P. Borst (\cite[Chapter II]{Borst}).
Let $L$ be an arbitrary set and $\mathrm{Fin} \, L$ the family of all finite, non-empty subsets of $L$.
For any $M\subset \mathrm{Fin}\, L$ and $\sigma\in \{\emptyset\}\cup \mathrm{Fin}\, L$ we put
$$M^{\sigma}=\{\tau \in \mathrm{Fin}\, L\colon \tau \cup \sigma\in M \, \text{and} \, \tau \cap \sigma=\emptyset \}.$$
We abbreviate $M^{\{a\}}$ by $M^a$ for any $a\in L$. 

\begin{deff}
Let $M$ be a subfamily of $\mathrm{Fin} \,L$. Define the ordinal number $\mathrm{Ord}\, M$ inductively as follows:\\
$\mathrm{Ord}\, M=0$ iff $M=\emptyset$;\\
$\mathrm{Ord}\,M\leq \alpha$ iff $\mathrm{Ord}\, M^{a}<\alpha$ for every $a\in L$;\\
$\mathrm{Ord}\,M=\alpha$ iff $\mathrm{Ord}\,M\leq \alpha$ and $\mathrm{Ord}\,M<\alpha$ is not true;\\
$\mathrm{Ord}\,M=\infty$ iff $\mathrm{Ord}\,M > \alpha$ for every ordinal number $\alpha$.
\end{deff}

In the case $\mathrm{Ord}\,M=\infty$ we can also say that the ordinal number $\mathrm{Ord} \,M$ {\em does not exist}.
We recollect some basic properties of the ordinal number $\mathrm{Ord}\, M$.

\begin{lem}[\textup{\cite[2.1.4]{Borst}}]\label{car}
Let $L$ be a set and $M\subset \mathrm{Fin} \,L$. In addition, let $n\in\omega$. Then $\mathrm{Ord}\, M\leq n$ if and only if $|\sigma|\leq n$ for every $\sigma\in M$.
\end{lem}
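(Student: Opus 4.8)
The plan is to argue by induction on $n$, establishing for each $n\in\omega$ that the stated equivalence holds for \emph{every} subfamily $M\subset\mathrm{Fin}\,L$ at once; this uniformity over $M$ is what makes the inductive step legitimate, since passing from $M$ to the derived families $M^a$ replaces $M$ by a different subfamily of $\mathrm{Fin}\,L$.

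For the base case $n=0$ I would reduce both sides to the single condition $M=\emptyset$. By the first clause of the definition, $\mathrm{Ord}\,M\le 0$ holds precisely when $M=\emptyset$. On the other side, every element of $\mathrm{Fin}\,L$ is non-empty, so any $\sigma\in M$ satisfies $|\sigma|\ge 1>0$; hence the condition ``$|\sigma|\le 0$ for all $\sigma\in M$'' can hold only vacuously, that is, exactly when $M=\emptyset$. Thus the two sides agree.

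For the inductive step, assume the equivalence for $n$. Since for an ordinal $\beta$ one has $\beta<n+1$ if and only if $\beta\le n$, the recursive clause rewrites $\mathrm{Ord}\,M\le n+1$ as: $\mathrm{Ord}\,M^a\le n$ for every $a\in L$. Applying the induction hypothesis to each family $M^a$ converts this into: $|\tau|\le n$ for every $a\in L$ and every $\tau\in M^a$. It then remains to verify that this last statement is equivalent to $|\sigma|\le n+1$ for all $\sigma\in M$. For the forward direction, given $\sigma\in M$ with $|\sigma|\ge 2$, I would delete any $a\in\sigma$ to obtain $\tau=\sigma\setminus\{a\}\in M^a$, so that $|\sigma|=|\tau|+1\le n+1$; for the converse, given $\tau\in M^a$ the set $\sigma=\tau\cup\{a\}$ lies in $M$ with $|\sigma|=|\tau|+1$, and the bound transfers back.

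The only point demanding care — the sole obstacle in an otherwise routine induction — is the non-emptiness constraint built into $\mathrm{Fin}\,L$. It forces the singletons of $M$ to be handled separately in the forward direction above: when $\sigma\in M$ is a singleton, $\sigma\setminus\{a\}=\emptyset\notin\mathrm{Fin}\,L$ is not available as an element of $M^a$, but then $|\sigma|=1\le n+1$ holds automatically because $n\ge 0$. This same non-emptiness is precisely what collapses the base case to $M=\emptyset$, so keeping track of it throughout is the key to a correct argument.
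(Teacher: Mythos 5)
The paper offers no proof of this lemma at all: it is quoted directly from \cite[2.1.4]{Borst}, so there is no in-paper argument to compare yours against. Your proof is correct and is the standard one: the induction on $n$, kept uniform over all subfamilies $M\subset\mathrm{Fin}\,L$ so that it can be applied to the derived families $M^a$, together with the correspondence $\tau\mapsto\tau\cup\{a\}$ between elements of $M^a$ and the non-singleton elements of $M$ containing $a$, and you correctly isolate the only delicate point --- that members of $\mathrm{Fin}\,L$ are non-empty, which forces the separate treatment of singletons in the forward direction and collapses the base case to $M=\emptyset$.
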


Thus one can say that $\mathrm{Ord} \,M$ is a transfinite generalization of the supremum of cardinalities of all members of $M$.

We call a subfamily $M$ of $\mathrm{Fin}\,L$ {\em inclusive} iff for every $\sigma, \sigma' \in \mathrm{Fin}L$ such that $\sigma' \subset \sigma$:  $\sigma \in M$ implies $\sigma'\in M$. By $\mathbb{N}$ we denote the set of all positive integers.

\begin{lem}[\textup{\cite[2.1.3]{Borst}}]\label{C}
Let $L$ be a set and $M$ an inclusive subfamily of $\mathrm{Fin}\,L$. Then $\mathrm{Ord}\, M=\infty$ iff there exists a sequence $\left(a_i\right)_{i\in \mathbb{N}}$ of distinct elements of $L$ such that $\sigma_n = \{a_i\colon 1\leq i \leq n\} \in M$ for each $n\in \mathbb{N}$.
\end{lem}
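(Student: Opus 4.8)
The plan is to prove both implications using three preliminary facts about the operation $M \mapsto M^{\sigma}$, which I would record first. First, if $M$ is inclusive then so is $M^{a}$ for every $a \in L$: if $\tau' \subset \tau \in M^{a}$ then $a \notin \tau \supset \tau'$ and $\tau' \cup \{a\} \subset \tau \cup \{a\} \in M$, so inclusiveness of $M$ gives $\tau' \cup \{a\} \in M$, i.e. $\tau' \in M^{a}$. Second, for disjoint $\sigma, \tau$ one has the composition law $(M^{\sigma})^{\tau} = M^{\sigma\cup\tau}$, a direct unwinding of the definitions; this lets me replace iterated single-element operations by a single operation indexed by a finite set. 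Third, if $M^{\sigma} \ne \emptyset$ and $M$ is inclusive, then $\sigma \in M$: any $\tau \in M^{\sigma}$ satisfies $\sigma \subset \tau \cup \sigma \in M$, so $\sigma \in M$ by inclusiveness. In particular $\mathrm{Ord}\, M^{\sigma} = \infty$ (being positive) forces $\sigma \in M$.

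For the implication ``chain $\Rightarrow \mathrm{Ord}\, M = \infty$'' I would argue by contradiction using minimality. Suppose some inclusive family possessing a chain $\sigma_1 \subset \sigma_2 \subset \cdots$ (with $\sigma_n = \{a_1,\dots,a_n\} \in M$) had $\mathrm{Ord}\, M = \alpha < \infty$, and choose one with $\alpha$ minimal. Because $\sigma_1 \in M$ we have $M \ne \emptyset$ and hence $\alpha \ge 1$, so $\mathrm{Ord}\, M \le \alpha$ yields $\mathrm{Ord}\, M^{a_1} < \alpha$. But $M^{a_1}$ is again inclusive and carries the shifted chain $\{a_2,\dots,a_{n+1}\} \in M^{a_1}$, contradicting minimality of $\alpha$. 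Hence no finite value of $\mathrm{Ord}$ is possible and $\mathrm{Ord}\, M = \infty$.

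The harder direction is ``$\mathrm{Ord}\, M = \infty \Rightarrow$ a chain exists'', and its crux is the following selection lemma, which I expect to be the main obstacle: if $M$ is inclusive with $\mathrm{Ord}\, M = \infty$, then there exists $a \in L$ with $\mathrm{Ord}\, M^{a} = \infty$. I would prove this contrapositively: if every $M^{a}$ had an ordinal value $\alpha_a < \infty$, then---using crucially that $L$ is a set, so that $\{\alpha_a : a \in L\}$ is a set of ordinals---the ordinal $\beta = \sup_{a\in L}(\alpha_a + 1)$ exists and satisfies $\mathrm{Ord}\, M^{a} < \beta$ for all $a$, whence $\mathrm{Ord}\, M \le \beta < \infty$, a contradiction.

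Granting the selection lemma, I would build the sequence recursively. Starting from $\mathrm{Ord}\, M = \infty$, suppose distinct $a_1,\dots,a_n$ have been chosen with $\mathrm{Ord}\, M^{\sigma_n} = \infty$ where $\sigma_n = \{a_1,\dots,a_n\}$. Applying the selection lemma to $M^{\sigma_n}$ produces $a_{n+1}$ with $\mathrm{Ord}\,(M^{\sigma_n})^{a_{n+1}} = \infty$; since this family is non-empty, its defining condition forces $a_{n+1} \notin \sigma_n$, so $a_{n+1}$ is new, and the composition law identifies $(M^{\sigma_n})^{a_{n+1}}$ with $M^{\sigma_{n+1}}$, keeping $\mathrm{Ord}\, M^{\sigma_{n+1}} = \infty$. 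Finally, the third preliminary fact gives $\sigma_n \in M$ for every $n$, so $(a_i)$ is the desired sequence of distinct elements with all initial segments in $M$. The only delicate points are the set-theoretic supremum step (which is exactly where the hypothesis that $L$ is a set is used) and verifying distinctness and membership from non-emptiness; both reduce to the preliminary observations above.
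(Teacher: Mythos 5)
The paper does not prove this lemma at all: it is quoted verbatim from Borst \cite[2.1.3]{Borst}, so there is no in-paper argument to compare against. Your proof is correct and is essentially the standard one: the composition law $(M^{\sigma})^{\tau}=M^{\sigma\cup\tau}$, preservation of inclusiveness, and non-emptiness of $M^{\sigma}$ forcing $\sigma\in M$ are all verified correctly; the minimal-counterexample argument for the easy direction is sound; and the crux of the hard direction --- that $\mathrm{Ord}\,M=\infty$ yields some $a$ with $\mathrm{Ord}\,M^{a}=\infty$, proved via the supremum $\sup_{a\in L}(\alpha_a+1)$, which exists precisely because $L$ is a set --- is exactly the right (and the classical) selection step, after which the recursive construction of the chain goes through without gaps.
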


\begin{lem}[\textup{\cite[2.1.6]{Borst}}]\label{fun}
Let $\phi \colon L \to L'$ be a function and $M\subset \mathrm{Fin} \,L$, $M'\subset \mathrm{Fin}\, L'$ be such that for every $\sigma \in M$ we have $\phi(\sigma)\in M'$ and $|\phi(\sigma)|=|\sigma|$. Then $\mathrm{Ord} \,M \leq \mathrm{Ord}\, M'$.
\end{lem}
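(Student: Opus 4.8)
The plan is to argue by transfinite induction on an ordinal parameter, proving the stronger schematic statement $P(\alpha)$: \emph{for every function $\phi\colon L\to L'$ and every pair $M\subset\mathrm{Fin}\,L$, $M'\subset\mathrm{Fin}\,L'$ satisfying the hypotheses, $\mathrm{Ord}\,M'\le\alpha$ implies $\mathrm{Ord}\,M\le\alpha$.} Once $P(\alpha)$ is established for all $\alpha$, the lemma follows: if $\mathrm{Ord}\,M'=\infty$ there is nothing to prove, and otherwise one applies $P(\alpha)$ with $\alpha=\mathrm{Ord}\,M'$ to conclude $\mathrm{Ord}\,M\le\mathrm{Ord}\,M'$.

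The crucial point, which I would isolate first as a self-contained observation, is that the hypotheses are inherited by the ``derivatives'' $M^a$ and $(M')^{\phi(a)}$. Fix $a\in L$ and set $b=\phi(a)$. I claim that $\phi$, $M^a$ and $(M')^b$ again satisfy the hypotheses of the lemma, i.e.\ for every $\tau\in M^a$ we have $\phi(\tau)\in (M')^b$ and $|\phi(\tau)|=|\tau|$. Indeed, if $\tau\in M^a$ then $\sigma:=\tau\cup\{a\}\in M$ and $a\notin\tau$, so by assumption $\phi(\sigma)\in M'$ and $|\phi(\sigma)|=|\sigma|$; the latter equality says precisely that $\phi$ is injective on $\sigma$. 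Injectivity on $\sigma\supseteq\tau$ gives $|\phi(\tau)|=|\tau|$, while injectivity together with $a\notin\tau$ gives $b=\phi(a)\notin\phi(\tau)$. Since $\phi(\sigma)=\phi(\tau)\cup\{b\}\in M'$, this yields $\phi(\tau)\in (M')^b$, as required. This step, resting on the cardinality hypothesis to produce the injectivity and hence $b\notin\phi(\tau)$, is the heart of the matter and the place where the argument would break down without the assumption $|\phi(\sigma)|=|\sigma|$.

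With this observation in hand the induction is routine. For $\alpha=0$, the condition $\mathrm{Ord}\,M'\le 0$ forces $M'=\emptyset$, whence $M=\emptyset$ (any $\sigma\in M$ would give $\phi(\sigma)\in M'$) and $\mathrm{Ord}\,M=0$. For $\alpha>0$, assuming $P(\beta)$ for all $\beta<\alpha$ and $\mathrm{Ord}\,M'\le\alpha$, I would verify $\mathrm{Ord}\,M^a<\alpha$ for each $a\in L$, which by definition gives $\mathrm{Ord}\,M\le\alpha$. If $M^a=\emptyset$ this is immediate since $\alpha>0$. Otherwise, the definition of $\mathrm{Ord}\,M'\le\alpha$ gives $\beta:=\mathrm{Ord}\,(M')^{b}<\alpha$ for $b=\phi(a)$, and applying the inductive hypothesis $P(\beta)$ to the configuration $(\phi,M^a,(M')^b)$ furnished by the observation yields $\mathrm{Ord}\,M^a\le\beta<\alpha$. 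The only care needed is to keep the statement uniform over all admissible triples $(\phi,M,M')$, so that the inductive hypothesis can legitimately be reapplied to the derived configuration, and to dispose of the base case and the $\infty$ case separately as above.
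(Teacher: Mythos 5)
Your proof is correct. Note that there is no in-paper argument to compare it against: the paper states this lemma as an imported result, citing Borst~\cite[2.1.6]{Borst} without proof, so your write-up supplies exactly what the citation hides. The argument you give is the standard one (and, in essence, Borst's): the two load-bearing points are both handled properly. First, the heredity observation, that the hypotheses descend from the triple $(\phi, M, M')$ to $(\phi, M^a, (M')^{\phi(a)})$, is proved correctly: for finite $\sigma=\tau\cup\{a\}$ the equality $|\phi(\sigma)|=|\sigma|$ is equivalent to injectivity of $\phi$ on $\sigma$, which gives both $|\phi(\tau)|=|\tau|$ and $\phi(a)\notin\phi(\tau)$, so that $\phi(\tau)\cup\{\phi(a)\}=\phi(\sigma)\in M'$ and $\phi(\tau)\cap\{\phi(a)\}=\emptyset$, i.e.\ $\phi(\tau)\in (M')^{\phi(a)}$. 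Second, you correctly keep the inductive statement $P(\alpha)$ uniform over all admissible triples, which is what licenses applying $P(\beta)$ with $\beta=\mathrm{Ord}\,(M')^{\phi(a)}<\alpha$ to the derived triple. Your case handling is also consistent with the definition of $\mathrm{Ord}$: the clause $\mathrm{Ord}\,M=0$ iff $M=\emptyset$ settles the base case, the clause for $\alpha>0$ drives the step, and the $\mathrm{Ord}\,M'=\infty$ case is trivially disposed of. (The separate treatment of $M^a=\emptyset$ is harmless but not actually needed; the argument via $P(\beta)$ covers it, since $M^a\neq\emptyset$ was never used.)
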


\begin{lem}[\textup{cf. \cite[Theorem 4]{Rad}}]
If $L$ is a countable set, $M\subset \mathrm{Fin} \,L$ and $\mathrm{Ord} \,M <\infty$, then $\mathrm{Ord} \,M<\omega_1$ (which means it is a countable ordinal number).
\end{lem}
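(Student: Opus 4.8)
The plan is to argue by transfinite induction on the ordinal $\mathrm{Ord}\, M$, which is legitimate precisely because the hypothesis $\mathrm{Ord}\, M < \infty$ guarantees that this is a genuine ordinal rather than the symbol $\infty$. The statement to be established by induction is the universally quantified one: for every countable $L$ and every $M\subset \mathrm{Fin}\, L$ with $\mathrm{Ord}\, M<\infty$, we have $\mathrm{Ord}\, M<\omega_1$.

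First I would record the recursive description of $\mathrm{Ord}\, M$ implicit in the Definition. Since $\mathrm{Ord}\, M=\gamma$ means that $\gamma$ is the least ordinal $\alpha$ with $\mathrm{Ord}\, M\leq \alpha$, and the latter unfolds to ``$\mathrm{Ord}\, M^{a}<\alpha$ for every $a\in L$'', one checks that for $M\neq \emptyset$
$$\mathrm{Ord}\, M=\sup_{a\in L}\bigl(\mathrm{Ord}\, M^{a}+1\bigr),$$
while $\mathrm{Ord}\,\emptyset=0$. In particular, from $\mathrm{Ord}\, M\leq \gamma$ one reads off that $\mathrm{Ord}\, M^{a}<\gamma$ for every $a\in L$; hence each $M^{a}$ again has an ordinal (is not $\infty$) and has $\mathrm{Ord}$ strictly smaller than that of $M$. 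This is exactly what makes the induction descend, and $M^{a}\subset \mathrm{Fin}\, L$ is still a subfamily over a countable set, so the inductive hypothesis will be applicable to it.

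Then comes the induction itself. The base case $\mathrm{Ord}\, M=0$, i.e. $M=\emptyset$, gives trivially $0<\omega_1$. For the inductive step, suppose $\mathrm{Ord}\, M=\gamma>0$ and that the claim holds for every subfamily (over a countable set) whose $\mathrm{Ord}$ is $<\gamma$. By the previous paragraph each $M^{a}$ satisfies $\mathrm{Ord}\, M^{a}<\gamma$, so the inductive hypothesis yields $\mathrm{Ord}\, M^{a}<\omega_1$, and therefore $\mathrm{Ord}\, M^{a}+1<\omega_1$ as well. Feeding this into the displayed formula, $\gamma$ is exhibited as a supremum of countably many countable ordinals, the index set being countable because $L$ is.

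The only genuinely set-theoretic ingredient — and the step I would treat as the crux — is the final one: that the supremum of a countable family of countable ordinals remains below $\omega_1$. This is precisely the regularity of $\omega_1$ (equivalently, that a countable union of countable sets is countable), and it is the place where countability of $L$ is used essentially; everything else is bookkeeping of the Definition. I would therefore state this fact explicitly and conclude $\gamma=\mathrm{Ord}\, M<\omega_1$, completing the induction.
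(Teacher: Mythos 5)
Your proof is correct. Note that the paper does not actually prove this lemma — it is stated with the citation ``cf.\ Theorem 4 of \cite{Rad}'' and no argument is given — so there is no in-paper proof to compare against; your transfinite induction on $\mathrm{Ord}\,M$, via the identity $\mathrm{Ord}\,M=\sup_{a\in L}\bigl(\mathrm{Ord}\,M^{a}+1\bigr)$ for nonempty $M$ together with the regularity of $\omega_1$, is the standard argument for this fact and fills the gap cleanly. Two details you handled correctly and that are worth keeping explicit: the supremum formula must exclude $M=\emptyset$ (where it would give $1$ instead of $0$), and the observation that $\mathrm{Ord}\,M\leq\gamma$ forces each $\mathrm{Ord}\,M^{a}$ to be an ordinal strictly below $\gamma$ is precisely what makes both the formula valid and the induction descend.
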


We will prove another useful lemma (one direction comes from \cite[2.1.5]{Borst}).

\begin{lem}
Let $L$ be a set, $M\subset \mathrm{Fin}\, L$ and $\gamma \in\{\emptyset\}\cup\mathrm{Fin}\, L$. Let $\alpha>0$ be an ordinal number and $p\in\omega$.
Then $\mathrm{Ord}\, M^{\gamma}< \alpha + p$ if and only if $\mathrm{Ord}\, M^{\gamma\cup\sigma}< \alpha$ for every $\sigma\in \{\emptyset\}\cup\mathrm{Fin}\, L$ with $|\sigma|=p$ and $\gamma \cap \sigma =\emptyset$.
\end{lem}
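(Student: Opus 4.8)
The plan is to reduce the whole statement, by induction on $p$, to the single-element case $p=1$, which in turn is just the defining clause of $\mathrm{Ord}$ unwound once. The engine of the argument is the composition identity
$$(M^{\sigma})^{\tau}=M^{\sigma\cup\tau}\qquad\text{whenever }\sigma,\tau\in\{\emptyset\}\cup\mathrm{Fin}\,L,\ \sigma\cap\tau=\emptyset,$$
which I would verify first by a direct unravelling of the definition of $M^{(\cdot)}$: a set $\rho$ lies in $(M^{\sigma})^{\tau}$ iff $\rho\cup\tau\cup\sigma\in M$ and $\rho$ is disjoint from both $\sigma$ and $\tau$, and the disjointness of $\sigma$ and $\tau$ lets one merge these into the single requirement defining $M^{\sigma\cup\tau}$.

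Next I would settle the case $p=1$ separately, as the crucial computation; it will serve as the unit step of the later induction, whose base case $p=0$ is trivial. Since $\mathrm{Ord}\,M^{\gamma}<\alpha+1$ is the same as $\mathrm{Ord}\,M^{\gamma}\le\alpha$, the defining clause gives that this holds iff $\mathrm{Ord}\,(M^{\gamma})^{a}<\alpha$ for every $a\in L$. Here I split on whether $a\in\gamma$: if $a\in\gamma$ then $(M^{\gamma})^{a}=\emptyset$, because the disjointness requirement built into $M^{\gamma}$ is violated, so $\mathrm{Ord}\,(M^{\gamma})^{a}=0<\alpha$ automatically, and this is exactly where the hypothesis $\alpha>0$ is used; if $a\notin\gamma$ then the composition identity gives $(M^{\gamma})^{a}=M^{\gamma\cup\{a\}}$. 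Hence $\mathrm{Ord}\,M^{\gamma}<\alpha+1$ iff $\mathrm{Ord}\,M^{\gamma\cup\{a\}}<\alpha$ for every $a\in L\setminus\gamma$, which is the assertion for $p=1$.

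For the inductive step I would pass from $p$ to $p+1$ using the ordinal identity $\alpha+(p+1)=(\alpha+p)+1$. Applying the already-proved $p=1$ case with the ordinal $\alpha+p$, still positive, turns $\mathrm{Ord}\,M^{\gamma}<\alpha+(p+1)$ into the condition that $\mathrm{Ord}\,M^{\gamma\cup\{a\}}<\alpha+p$ for every $a\in L\setminus\gamma$; applying the induction hypothesis at base $\gamma\cup\{a\}$ then rewrites each of these as $\mathrm{Ord}\,M^{\gamma\cup\{a\}\cup\tau}<\alpha$ for every $\tau$ of size $p$ disjoint from $\gamma\cup\{a\}$. The only remaining point is bookkeeping: every $\sigma$ of size $p+1$ disjoint from $\gamma$ is precisely a set of the form $\{a\}\cup\tau$ with $a\notin\gamma$, $|\tau|=p$, and $\tau\cap(\gamma\cup\{a\})=\emptyset$, and $\gamma\cup\{a\}\cup\tau=\gamma\cup\sigma$; since both quantifiers are universal, this reindexing is an equivalence and delivers the claim for $p+1$ (both directions at once).

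I expect no deep obstacle. The points demanding care are the correct handling of disjointness in the composition identity and, above all, ensuring that the terms with $a\in\gamma$ are harmless, which is precisely why $\alpha>0$ is assumed: without it the $p=1$ step, and hence the induction, would break down on those terms. The ordinal arithmetic $\alpha+(p+1)=(\alpha+p)+1$ and the purely combinatorial recombination of $(a,\tau)$ into $\sigma$ are routine.
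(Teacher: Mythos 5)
Your proof is correct and takes essentially the same route as the paper: induction on $p$, reduction to the one-element step via the identity $(M^{\rho})^{a}=M^{\rho\cup\{a\}}$ for $a\notin\rho$, and the observation that $\alpha>0$ disposes of the degenerate terms with $a\in\gamma$ (which the paper handles implicitly with ``it suffices to check $a\notin\gamma\cup\sigma'$''). The only difference is organizational: the paper splits $\alpha+p=(\alpha+1)+(p-1)$, applying the inductive hypothesis at the ordinal $\alpha+1$ and unwinding the last element inside, whereas you split $\alpha+(p+1)=(\alpha+p)+1$, peel one element off first via your standalone $p=1$ case and then apply the inductive hypothesis at the enlarged base $\gamma\cup\{a\}$ --- a mirror image of the same induction.
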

\begin{proof}
We proceed by induction on $p$. If $p=0$, then $\sigma$ must be empty and the assertion is trivial.
Assume the lemma holds for $p-1$. Since $\alpha+p=(\alpha+1)+(p-1)$, we have $\mathrm{Ord}\, M^{\gamma}< \alpha + p$ if and only if $\mathrm{Ord}\, M^{\gamma\cup\sigma'}< \alpha+1$ for every $\sigma'$ with $|\sigma'|=p-1$, $\sigma'\cap \gamma =\emptyset$.

Suppose the latter and take an arbitrary $\sigma\in \{\emptyset\}\cup\mathrm{Fin}\, L$ with $|\sigma|=p$ and $\gamma \cap \sigma =\emptyset$. Pick an element $a\in\sigma$ and let $\sigma':=\sigma\setminus\{a\}$. We have $\mathrm{Ord} \, M^{\gamma\cup\sigma}=\mathrm{Ord}\, (M^{\gamma \cup \sigma'})^a$, which is by definition less than $\alpha$.

Suppose now that $\mathrm{Ord}\, M^{\gamma\cup\sigma}< \alpha$ for every $\sigma\in \{\emptyset\}\cup\mathrm{Fin}\, L$ with $|\sigma|=p$ and $\gamma \cap \sigma =\emptyset$. Take an arbitrary $\sigma'$ with $|\sigma'|=p-1$, $\sigma'\cap \gamma =\emptyset$. We want to show that $\mathrm{Ord}\, M^{\gamma\cup\sigma'}\leq \alpha$, i.e. $\mathrm{Ord}\, (M^{\gamma\cup\sigma'})^{a}<\alpha$ for any $a\in L$. It suffices to check it only for $a\not\in\gamma\cup\sigma'$. Then $(M^{\gamma\cup\sigma'})^{a}=M^{\gamma\cup \sigma}$ for $\sigma:=\sigma'\cup\{a\}$ and we can use our assumption.
\end{proof}

The most interesting case is that for $\gamma=\emptyset$.
\begin{cor}\label{mycor}
Let $L$ be a set, $M\subset \mathrm{Fin}\, L$, $\alpha>0$ and $p\in\omega$.
Then $\mathrm{Ord}\, M< \alpha + p$ if and only if $\mathrm{Ord}\, M^{\sigma}< \alpha$ for every $\sigma\subset L$ with $|\sigma|=p$.
\end{cor}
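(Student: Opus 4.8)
The plan is to obtain this statement as the special case $\gamma = \emptyset$ of the preceding lemma, so essentially no new work is required beyond a careful substitution. First I would record the identity $M^{\emptyset} = M$: unwinding the definition gives $M^{\emptyset} = \{\tau \in \mathrm{Fin}\, L : \tau \cup \emptyset \in M \text{ and } \tau \cap \emptyset = \emptyset\}$, and since $\tau \cup \emptyset = \tau$ and $\tau \cap \emptyset = \emptyset$ hold for every $\tau$, this collapses to $\{\tau \in \mathrm{Fin}\, L : \tau \in M\} = M$. Likewise, for any admissible $\sigma$ we have $M^{\emptyset \cup \sigma} = M^{\sigma}$.

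Next I would check that the side conditions of the lemma degenerate correctly. Taking $\gamma = \emptyset$, the disjointness requirement $\gamma \cap \sigma = \emptyset$ becomes $\emptyset \cap \sigma = \emptyset$, which holds vacuously and can therefore be dropped. The one point deserving a sentence of care is matching the two descriptions of the range of $\sigma$: the lemma quantifies over $\sigma \in \{\emptyset\}\cup\mathrm{Fin}\, L$ with $|\sigma| = p$, whereas the corollary quantifies over $\sigma \subset L$ with $|\sigma| = p$. For $p \geq 1$ a subset of $L$ of cardinality $p$ is automatically a finite nonempty set, hence a member of $\mathrm{Fin}\, L$, and for $p = 0$ both phrasings single out $\sigma = \emptyset$; so the two index sets coincide.

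Substituting $\gamma = \emptyset$ into the lemma then rewrites ``$\mathrm{Ord}\, M^{\gamma} < \alpha + p$'' as ``$\mathrm{Ord}\, M < \alpha + p$'' and ``$\mathrm{Ord}\, M^{\gamma \cup \sigma} < \alpha$ for every admissible $\sigma$'' as ``$\mathrm{Ord}\, M^{\sigma} < \alpha$ for every $\sigma \subset L$ with $|\sigma| = p$'', which is precisely the asserted equivalence. There is no genuine obstacle in this argument; the only thing to watch is the bookkeeping of the quantifier range for $\sigma$, which I would spell out once as above, after which the corollary is immediate.
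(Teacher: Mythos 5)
Your proof is correct and matches the paper's own route exactly: the paper presents this corollary as the immediate specialization $\gamma=\emptyset$ of the preceding lemma, which is precisely your argument. Your added bookkeeping (that $M^{\emptyset}=M$ and that the two quantifier ranges for $\sigma$ coincide, including the $p=0$ case) is sound and just makes explicit what the paper leaves implicit.
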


We are going to establish a condition classifying inclusive families $M\subset \mathrm{Fin}\, L$ with $\mathrm{Ord}\, M<\omega\cdot\omega$. It will be useful to introduce some (simplified) game-theoretical terminology.

\begin{deff}\label{st}
Let $m,n\in\omega$, $n>0$. We call a subset $S$ of the cartesian power $(\mathrm{Fin}\, L)^{m+1}$ an $m$-{\em strategy starting at $n$} if and only if it satisfies the following conditions:
\begin{enumerate}
\item $|\sigma_0|=n$ for any $(\sigma_0,\dots,\sigma_m)\in S$;
\item if $(\sigma_0,\dots,\sigma_m)\in S$, $(\tau_0,\dots,\tau_m)\in S$ and $\sigma_i=\tau_i$ for $0\leq i\leq k<m$, then $|\sigma_{k+1}|=|\tau_{k+1}|$;
\item if $0\leq k\leq m$ and $(\sigma_0,\dots,\sigma_k)$ is an initial segment of some $(\sigma_0,\dots,\sigma_m)\in S$, then so is $(\sigma_0,\dots,\sigma_{k-1},\tau)$ for any $\tau$ with $|\tau|=|\sigma_k|$.
\end{enumerate} 
\end{deff}
Loosely speaking, given $(\sigma_0,\dots,\sigma_k)$ already constructed following $S$, the strategy determines the cardinality of the next term $\sigma_{k+1}$.

\begin{prop}\label{strat}
Let $L$ be a set, $M$ an inclusive subfamily of $\mathrm{Fin}\, L$ and $m,n\in\omega$. Then {$\mathrm{Ord}\, M\leq m\cdot\omega+n$} if and only if there exists an $m$-strategy $S$ starting at $n+1$ such that for every sequence $(\sigma_0,\sigma_1,\dots,\sigma_m) \in S$ of pairwise disjoint sets the condition $\sigma_0\cup\dots\cup\sigma_m \not\in M$ holds.
\end{prop}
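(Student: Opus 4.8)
The plan is to recognise the proposition as a game-theoretic repackaging of the iterated reduction supplied by Corollary~\ref{mycor} and by the unlabelled lemma preceding it, which I will call the \emph{reduction lemma}: for an ordinal $\alpha>0$ one has $\mathrm{Ord}\, M^{\gamma}<\alpha+p$ exactly when $\mathrm{Ord}\, M^{\gamma\cup\sigma}<\alpha$ for every $\sigma$ with $|\sigma|=p$ and $\gamma\cap\sigma=\emptyset$. The identity $(M^{\gamma})^{\sigma}=M^{\gamma\cup\sigma}$ for disjoint $\gamma,\sigma$ (immediate from the definition of $M^{\sigma}$) lets me iterate this reduction while accumulating the played sets into $\gamma=\sigma_0\cup\dots\cup\sigma_i$, and the hypothesis $\gamma\cap\sigma=\emptyset$ is precisely what forces the sets of a play to be pairwise disjoint. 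Writing $\mathrm{Ord}\, M\le m\cdot\omega+n$ as $\mathrm{Ord}\, M<m\cdot\omega+(n+1)$ and using that $k\cdot\omega=(k-1)\cdot\omega+\omega$ is a limit ordinal for $k\ge1$, the guiding slogan is that consuming the summand $+(n+1)$ costs one move $\sigma_0$ of cardinality $n+1$, while consuming each of the $m$ factors $\omega$ costs one further move; the cardinality of such a move may be chosen freely and, crucially, it may depend on the actual sets already played through the residual family $M^{\sigma_0\cup\dots\cup\sigma_i}$, which is exactly why a strategy, rather than a fixed tuple of cardinalities, is the right notion. The degenerate case $m=0$ I would dispose of first: there $m\cdot\omega+n=n$, the strategy has the single coordinate $\sigma_0$ with $|\sigma_0|=n+1$, and the equivalence is just Lemma~\ref{car} together with inclusiveness of $M$.

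For the forward implication I would assume $\mathrm{Ord}\, M<m\cdot\omega+(n+1)$ and build $S$ by recursion on coordinates, simultaneously defining cardinality functions $p_i(\sigma_0,\dots,\sigma_{i-1})$. Corollary~\ref{mycor} (with $\alpha=m\cdot\omega$, $p=n+1$) gives $\mathrm{Ord}\, M^{\sigma_0}<m\cdot\omega$ for every $\sigma_0$ with $|\sigma_0|=n+1$. Inductively, from a pairwise disjoint prefix with $\mathrm{Ord}\, M^{\sigma_0\cup\dots\cup\sigma_i}<(m-i)\cdot\omega$ and $i\le m-2$, I use that $(m-i)\cdot\omega$ is a limit to choose the least $p_{i+1}\ge1$ with $\mathrm{Ord}\, M^{\sigma_0\cup\dots\cup\sigma_i}<(m-i-1)\cdot\omega+p_{i+1}$; the reduction lemma then yields $\mathrm{Ord}\, M^{\sigma_0\cup\dots\cup\sigma_{i+1}}<(m-i-1)\cdot\omega$ for every $\sigma_{i+1}$ of cardinality $p_{i+1}$ disjoint from the earlier sets. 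Iterating this for $i=0,\dots,m-2$ leaves $\mathrm{Ord}\, M^{\sigma_0\cup\dots\cup\sigma_{m-1}}<\omega$ (for $m=1$ this bound is already the output of Corollary~\ref{mycor}), hence finite; setting $p_m:=\mathrm{Ord}\, M^{\sigma_0\cup\dots\cup\sigma_{m-1}}+1$, Lemma~\ref{car} and inclusiveness show that no set of cardinality $p_m$ lies in $M^{\sigma_0\cup\dots\cup\sigma_{m-1}}$, that is $\sigma_0\cup\dots\cup\sigma_m\notin M$ for every pairwise disjoint play. Declaring $S$ to consist of all $(\sigma_0,\dots,\sigma_m)$ whose successive cardinalities follow $n+1,p_1,\dots,p_m$ (extending the $p_i$ by, say, $1$ on non-disjoint prefixes) makes conditions (1)--(3) hold by construction and secures the payoff.

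For the converse I would run the same chain upward. Given an $m$-strategy $S$ starting at $n+1$ with the payoff property, conditions (2)--(3) let me read off its cardinality functions, and I prove by downward induction on $j$ that every pairwise disjoint prefix extending to a member of $S$ satisfies $\mathrm{Ord}\, M^{\sigma_0\cup\dots\cup\sigma_j}<(m-j)\cdot\omega$. The base case $j=m-1$ is the only non-formal point: by condition (3), every $\sigma_m$ of the prescribed cardinality $p_m$ disjoint from $\gamma:=\sigma_0\cup\dots\cup\sigma_{m-1}$ completes a play in $S$, so the payoff gives $\gamma\cup\sigma_m\notin M$; since the members of $M^{\gamma}$ are automatically disjoint from $\gamma$ and $M^{\gamma}$ inherits inclusiveness, no member of $M^{\gamma}$ has cardinality $\ge p_m$, whence Lemma~\ref{car} gives $\mathrm{Ord}\, M^{\gamma}\le p_m-1<\omega$. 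The inductive step for $j\le m-2$ is the ``if'' part of the reduction lemma with $\alpha=(m-j-1)\cdot\omega>0$, where condition (3) again supplies the universal quantifier ``for every $\sigma_{j+1}$ of cardinality $p_{j+1}$''. Reaching $j=0$ gives $\mathrm{Ord}\, M^{\sigma_0}<m\cdot\omega$ for every $\sigma_0$ with $|\sigma_0|=n+1$, and a final application of Corollary~\ref{mycor} yields $\mathrm{Ord}\, M<m\cdot\omega+(n+1)$.

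I expect the main difficulty to be bookkeeping rather than a single hard idea: aligning the strategy conditions (2)--(3) with the universal quantifier of the reduction lemma, extending the cardinality functions to non-disjoint prefixes so that $S$ is genuinely an $m$-strategy, and keeping straight which step is a limit-ordinal choice (forward direction) as opposed to the Lemma~\ref{car}/inclusiveness argument (the base case $j=m-1$). I would also flag the convention that a strategy is tacitly taken to be non-empty, since the empty $S$ satisfies (1)--(3) vacuously and would otherwise invalidate the ``if'' direction.
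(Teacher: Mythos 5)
Your proof is correct, and its mathematical core is the same peeling-off mechanism as the paper's, but you organize it genuinely differently. The paper proceeds by structural induction on $m$: it applies Corollary~\ref{mycor} once to pass from $M$ to the residual families $M^{\sigma_0}$, invokes the inductive hypothesis on each $M^{\sigma_0}$ (whose Ord is $<m\cdot\omega$, hence $\leq(m-1)\cdot\omega+n_1(\sigma_0)$), and glues the resulting $(m-1)$-strategies $S(\sigma_0)$ into $S$; the converse is the same induction read backwards. You instead unroll that induction into an explicit coordinate-by-coordinate argument: you iterate the unlabelled reduction lemma with a nonempty accumulated $\gamma=\sigma_0\cup\dots\cup\sigma_i$ --- the general form that the paper proves but then only ever uses through its $\gamma=\emptyset$ corollary --- defining the cardinality functions $p_i$ directly (using that $(m-i)\cdot\omega$ is a limit ordinal), and for the converse you run a downward induction on the coordinate $j$ rather than an induction on $m$. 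The two routes are interchangeable: the paper's is more compact to write, while yours makes the strategy's cardinality functions explicit and avoids re-instantiating the whole proposition for residual families; both need the easy fact that $M^{\gamma}$ inherits inclusiveness from $M$ (you state it, the paper leaves it tacit when applying its inductive hypothesis to $M^{\sigma_0}$). Your closing remark about empty strategies is also a genuine catch rather than pedantry: $S=\emptyset$ satisfies conditions (1)--(3) and the payoff condition vacuously, so the ``if'' direction is literally false without the tacit convention --- visible in the paper's phrase ``the (unique) $0$-strategy'' and in its converse argument, which presumes every $\sigma_0$ with $|\sigma_0|=n+1$ begins some play --- that strategies are non-empty.
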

\begin{proof}
We proceed by induction on $m\in\omega$. For $m=0$ and fixed $n\in\omega$, we have $\mathrm{Ord}\,M\leq n$ iff the cardinality of all $\sigma\in M$ is bounded by $n$. Since $M$ is inclusive, it is equivalent to say that $\sigma_0\not\in M$ for any $\sigma_0$ with $|\sigma_0|=n+1$, i.e. the (unique) $0$-strategy starting at $n+1$ fullfills the requirements of the proposition.  

Let $m>0$ and suppose the proposition holds for $m-1$. Fix $n\in\omega$.
Assume $\mathrm{Ord}\, M \leq m\cdot\omega +n$. By Corollary \ref{mycor} we have $\mathrm{Ord}\, M^{\sigma_0} < m \cdot \omega$ for any $\sigma_0$ with $|\sigma_0|=n+1$. Thus, $\mathrm{Ord}\, M^{\sigma_0} \leq (m-1) \cdot \omega + n_1$ for some $n_1=n_1(\sigma_0)\in\omega$. By the inductive assumption, there is an $(m-1)$-strategy $S(\sigma_0)$ starting at $n_1+1$ such that $\sigma_1\cup \dots \sigma_m \not \in M^{\sigma_0}$ for every sequence $(\sigma_1,\dots,\sigma_m) \in S(\sigma_0)$ of pairwise disjoint sets.
Define $S$ to consist of all $(\sigma_0,\sigma_1,\dots,\sigma_m)$ such that $|\sigma_0|=n+1$ and $(\sigma_1,\dots,\sigma_m)\in S(\sigma_0)$. It is not hard to check that $S$ is an $m$-strategy starting at $n+1$ and satisfies $\sigma_0\cup\dots\cup\sigma_m \not\in M$ for pairwise disjoint $(\sigma_0,\sigma_1,\dots,\sigma_m) \in S$.

Suppose we have an $m$-strategy starting at $n+1$ and satisfying $\sigma_0\cup\dots\cup\sigma_m \not\in M$ for pairwise disjoint $(\sigma_0,\sigma_1,\dots,\sigma_m) \in S$. Then any $\sigma_0$ with $|\sigma_0|=n+1$ determines a number $n_1(\sigma_0)$ and an {$(m-1)$-strategy} $S(\sigma_0)$ starting at $n_1(\sigma_0)+1$ consisting of those $(\sigma_1,\dots,\sigma_m)$ for which $(\sigma_0,\sigma_1,\dots,\sigma_m)\in S$. Therefore $\sigma_1\cup \dots \sigma_m \not \in M^{\sigma_0}$ for every sequence $(\sigma_1,\dots,\sigma_m) \in S(\sigma_0)$ of pairwise disjoint sets. Hence, by the inductive assumption, $\mathrm{Ord}\, M^{\sigma_0} \leq (m-1) \cdot \omega + n_1(\sigma_0)<m\cdot \omega$. Since it holds for any $|\sigma_0|=n+1$, Corollary \ref{mycor} implies  $\mathrm{Ord}\, M \leq m\cdot\omega + n$.
\end{proof}

\section{Two approaches to transfinite asymptotic dimension}

The following definitions are usually formulated for metric spaces. However, we adjust them to the broader context of $\infty$-{\em pseudometric spaces}, i.e. spaces consisting of a set $X$ with a function $d\colon X\times X \to [0,\infty]$ satisfying the properties of symmetry, triangle inequality and taking value $0$ on the diagonal in $X\times X$.   

\begin{deff}\label{ub}
We say that a family $\mathcal{U}$ of subspaces of an $\infty$-pseudometric space $(X,d)$ is {\em uniformly bounded} if the number
$$\mathrm{mesh}(\mathcal{U}):=\sup \{\mathrm{diam}(U) \colon U \in \mathcal{U}\}$$
is finite. Let $r>0$, we say that $\mathcal{U}$ is $r$-{\em disjoint} if for any different $A,B\in \mathcal{U}$ we have
$$\mathrm{dist}(A,B):=\inf \{d(a,b)\colon a\in A, b\in B\} \geq r.$$ 
\end{deff}

\begin{deff}\label{scale}
Let $(X,d)$ be an $\infty$-pseudometric space, $x\in X$ and $r>0$. The {\em scale-}$r${\em -component} of $x$ in $X$ is the set of all points $y\in X$ that can be connected to $x$ by a scale-$r$-chain, i.e. a sequence of points $y=x_0, \dots, x_n=x$ in $X$ such that $B(x_i,r)\cap B(x_{i+1},r)\neq \emptyset$ for each $0\leq i <n$.

We say that $X$ is of {\em scale-}$r${\em -dimension} $0$ if the family of scale-$r$-components taken for all points $x\in X$ is uniformly bounded.
More generally, $X$ is of scale-$r$-dimension at most $n$ if it can be represented as the union of some $n+1$ subspaces of scale-$r$-dimension $0$.
\end{deff}

We associate with an $\infty$-pseudometric space $(X,d)$ two inclusive families of finite subsets of $\mathbb{N}$. The first of them is taken from \cite{Rad}.

\begin{deff}
We define $\mathcal{A}=\mathcal{A}(X,d)$ to consist precisely of all $\sigma\in \mathrm{Fin}\,\mathbb{N}$ such that there is no family $(X_i)_{i\in\sigma}$ of subspaces of $X$ which covers $X$ and each $X_i$ decomposes as the union of some $i$-disjoint uniformly bounded family.
\end{deff}

Similarly, we can define another family.
\begin{deff}
We define $M=M(X,d)$ to consist precisely of all $\sigma\in \mathrm{Fin}\,\mathbb{N}$ such that there is no family $(X_i)_{i\in\sigma}$ of subspaces of $X$ which covers $X$ and each $X_i$ has scale-$i$-dimension $0$.
\end{deff}

We have the following
\begin{prop}
For any $\infty$-pseudometric space $(X,d)$: $\mathrm{Ord} \, \mathcal{A}(X,d) = \mathrm{Ord} \, M(X,d)$.
\end{prop}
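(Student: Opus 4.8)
The plan is to compare the two families directly, using only the two monotonicity tools already at hand: the fact that $\mathrm{Ord}$ is monotone under inclusion (immediate from the inductive definition, or from Lemma~\ref{fun} applied to the identity map), and Lemma~\ref{fun} applied to the doubling map $\phi(i)=2i$ on the index set $\mathbb{N}$. First I would record two elementary facts, valid because every scale parameter $r>0$ and by the triangle inequality: $d(a,b)<r$ implies $B(a,r)\cap B(b,r)\neq\emptyset$ (witnessed by $a$ itself), and $B(a,r)\cap B(b,r)\neq\emptyset$ implies $d(a,b)<2r$.

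From the first implication I would obtain $\mathcal{A}\subseteq M$. If a subspace $Y$ has scale-$i$-dimension $0$, its scale-$i$-components form a uniformly bounded family, and they are automatically $i$-disjoint: two points in distinct components cannot satisfy $d(a,b)<i$, for otherwise the one-step chain $a,b$ would already connect them. Hence any cover witnessing $\sigma\notin M$ (by subspaces of scale-$i$-dimension $0$) is simultaneously a cover by subspaces decomposing into $i$-disjoint uniformly bounded families, so it witnesses $\sigma\notin\mathcal{A}$ as well. Contrapositively $\mathcal{A}\subseteq M$, whence $\mathrm{Ord}\,\mathcal{A}\leq\mathrm{Ord}\,M$.

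For the reverse inequality I would use the second implication together with $\phi(i)=2i$. If $Y$ decomposes as a union of a $2i$-disjoint uniformly bounded family $\mathcal{U}$, then any scale-$i$-chain must remain inside a single member of $\mathcal{U}$: a scale-$i$-step between points of different members would force their distance below $2i$, contradicting $2i$-disjointness. Thus each scale-$i$-component of $Y$ lies in one member of $\mathcal{U}$ and is bounded by $\mathrm{mesh}(\mathcal{U})$, so $Y$ has scale-$i$-dimension $0$. Consequently, if $\phi(\sigma)=\{2i:i\in\sigma\}\notin\mathcal{A}$, witnessed by a cover $(X_{2i})_{i\in\sigma}$ with each $X_{2i}$ a union of a $2i$-disjoint uniformly bounded family, then reindexing $Y_i:=X_{2i}$ produces a cover showing $\sigma\notin M$. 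Equivalently $\sigma\in M\Rightarrow\phi(\sigma)\in\mathcal{A}$, and since $\phi$ is injective we have $|\phi(\sigma)|=|\sigma|$; Lemma~\ref{fun} then yields $\mathrm{Ord}\,M\leq\mathrm{Ord}\,\mathcal{A}$, completing the equality.

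The only substantive point, and the place to be careful, is the factor $2$ in the second direction: a $2i$-disjoint decomposition delivers scale-$i$-dimension $0$, not scale-$2i$-dimension $0$, so the two families do not agree on the nose and a naive inclusion fails. The entire role of Lemma~\ref{fun} with $\phi(i)=2i$ is to absorb this index discrepancy while preserving the cardinalities of the members. I would double-check the open/closed-ball convention in Definition~\ref{scale} to confirm that the strict estimate $d(a,b)<2r$ is indeed what comes out; under either convention the same argument succeeds, at worst after replacing $\phi$ by a slightly larger injective linear map.
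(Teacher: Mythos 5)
Your proof is correct and takes essentially the same route as the paper's: the inclusion $\mathcal{A}\subseteq M$ (because distinct scale-$i$-components are $i$-disjoint) gives $\mathrm{Ord}\,\mathcal{A}\leq \mathrm{Ord}\,M$ via the identity map, and Lemma~\ref{fun} applied to the doubling map $\phi(i)=2i$ (absorbing exactly the factor-$2$ discrepancy you correctly identify as the crux) gives $\mathrm{Ord}\,M\leq \mathrm{Ord}\,\mathcal{A}$.
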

\begin{proof}
Observe that if $Y$ is the union of a $2r$-disjoint uniformly bounded family $\mathcal{U}$, then each scale-$r$-chain in $Y$ must lie in some common $U\in \mathcal{U}$, thus every scale-$r$-component of $Y$ has diameter bounded by $\mathrm{mesh}(\mathcal{U})$. So, if $\sigma\in M$, then $\{2n\colon n\in \sigma\}\in \mathcal{A}$. Applying Lemma \ref{fun} to the function $\phi\colon \mathbb{N}\to \mathbb{N}$, $\phi(n)=2n$, we conclude that $\mathrm{Ord}\, M \leq \mathrm{Ord}\, \mathcal{A}$.
For the converse, notice that different scale-$r$-components are $r$-disjoint so $\sigma\in \mathcal{A}$ implies $\sigma\in M$ and applying Lemma \ref{fun} to the identity function on $\mathbb{N}$ we obtain $\mathrm{Ord}\, \mathcal{A} \leq \mathrm{Ord} \, M$.
\end{proof}

\begin{deff}[\cite{Rad}]
We call the ordinal number $\mathrm{Ord} \, \mathcal{A}(X,d) = \mathrm{Ord} \, M(X,d)$ {\em transfinite asymptotic dimension} of $X$ and denote it by $\mathrm{trasdim}\, (X,d)$.
\end{deff}

\begin{deff}
We say that $X$ has {\em asymptotic property C} if and only if $\mathrm{trasdim}\, X <\infty$.
\end{deff}

Using Lemma \ref{C} and treating $\mathrm{trasdim}\, X <\infty$ as $\mathrm{Ord} \, \mathcal{A}$, we get original Dranishnikov's definition (\cite{Dran}), namely: $X$ has asymptotic property C if and only if for any sequence $\left(a_i\right)_{i\in \mathbb{N}}$ of distinct natural numbers there exists $n$ and a sequence $\left(\mathcal{U}_i\right)_{i=1}^{n}$ of uniformly bounded families such that $\bigcup_{i=1}^{n} \mathcal{U}_i$ covers $X$ and $\mathcal{U}_i$ is $a_i$-disjoint for $i=1,\dots,n$.

Thinking of $\mathrm{trasdim}\, X <\infty$ rather as of $\mathrm{Ord} \, M$, we get definition due to J. Dydak (\cite[5.12]{Dydak}):
$X$ has asymptotic property C if and only if for any sequence $\left(a_i\right)_{i\in \mathbb{N}}$ of distinct natural numbers there exists $n$ and a decomposition 
$X=\bigcup_{i=1}^{n} X_i$ such that each $X_i$ is of scale-$a_i$-dimension $0$.

\section{APD profiles and transfinite asymptotic dimension}

In his paper \cite{Dydak} J. Dydak defined so called {\em APD profile} of an $\infty$-pseudometric space. It is justified and convenient to deal only with integral APD profiles.
\begin{deff}
Suppose $X$ is an $\infty$-pseudometric space. A finite array of non-decreasing functions $(\alpha_0,\dots,\alpha_k)$ from $\mathbb{N}$ to $\mathbb{N}$ is an {\em integral APD profile} of $X$ if and only if $\alpha_0$ is constant and for any non-decreasing array $(r_0,\dots,r_k)$ of positive integers there is a decomposition
of $X$ as the union of its subsets $X_0,\dots,X_k$ such that each $X_i$ has scale-$r_i$-dimension at most $\alpha_i (r_{i-1})-1$ for $1\leq i\leq k$ and $0$ for $i=0$.
\end{deff}

It was proved in \cite{Dydak} that having an APD profile is a hereditary coarse invariant and so is the minimal length of APD profiles. A space $X$ has asymptotic dimension at most $n$ iff $(1,n)$ is an APD profile of $X$. Moreover, $X$ has finite asymptotic dimension iff it has an APD profile consisting of constant functions.
Spaces which admit an APD profile are said to have {\em asymptotic property D}, which implies having asymptotic property C.
We are interested in finding a deeper relation between the form of an APD profile and the precise value of $\mathrm{trasdim}\,X$.

\begin{tw}\label{moje}
Let $n\in\omega$. An $\infty$-pseudometric space $X$ has an integral APD profile $(n+1,f)$ if and only if $\mathrm{trasdim}\, X \leq \omega+n$.
\end{tw}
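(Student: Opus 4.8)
The plan is to translate both sides into statements about the inclusive family $M=M(X,d)$, for which $\mathrm{trasdim}\,X=\mathrm{Ord}\,M$, and then apply Corollary \ref{mycor}. Taking $\alpha=\omega$ and $p=n+1$ there, the inequality $\mathrm{Ord}\,M\le\omega+n$ (that is, $\mathrm{Ord}\,M<\omega+(n+1)$) is equivalent to $\mathrm{Ord}\,M^{\sigma}<\omega$ for every $\sigma\subset\mathbb{N}$ with $|\sigma|=n+1$, which by Lemma \ref{car} just means that the sets in $M^{\sigma}$ have boundedly many elements. Recall that the profile $(n+1,f)$ asks, for every non-decreasing pair $(r_0,r_1)$ of positive integers, for a decomposition $X=X_0\cup X_1$ with $X_0$ of scale-$r_0$-dimension at most $n$ and $X_1$ of scale-$r_1$-dimension at most $f(r_0)-1$. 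Throughout I will use the elementary monotonicity remark that a subspace of scale-$r$-dimension $0$ also has scale-$s$-dimension $0$ whenever $s\le r$, since decreasing $r$ only refines the scale-components.

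For the implication from the profile to the dimension bound I would fix $\sigma$ with $|\sigma|=n+1$, set $r_0:=\max\sigma$ and $N:=f(r_0)$, and show that every $\tau\in M^{\sigma}$ satisfies $|\tau|\le N$. Given $\tau$ disjoint from $\sigma$ with $|\tau|>N$, apply the profile to the non-decreasing pair $(r_0,r_1)$ with $r_1:=\max(\sigma\cup\tau)$, obtaining $X=X_0\cup X_1$ where $X_0$ is a union of $n+1$ subspaces of scale-$r_0$-dimension $0$ and $X_1$ is a union of at most $N$ subspaces of scale-$r_1$-dimension $0$. By the monotonicity remark the $n+1$ pieces of $X_0$ have scale-$s$-dimension $0$ for each $s\in\sigma$ (as $s\le r_0$), and the at most $N$ pieces of $X_1$ have scale-$t$-dimension $0$ for each $t\in\tau$ (as $t\le r_1$). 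Since $|\sigma|=n+1$ and $|\tau|>N$, these pieces can be indexed injectively by the elements of the disjoint union $\sigma\cup\tau$, giving a family $(X_j)_{j\in\sigma\cup\tau}$ covering $X$ with each $X_j$ of scale-$j$-dimension $0$; hence $\sigma\cup\tau\notin M$ and $\tau\notin M^{\sigma}$. Thus $\mathrm{Ord}\,M^{\sigma}\le N<\omega$, and Corollary \ref{mycor} gives $\mathrm{trasdim}\,X\le\omega+n$.

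For the converse I would first record, for each $\sigma$ with $|\sigma|=n+1$, the finite bound $N(\sigma)$ on the cardinalities of the members of $M^{\sigma}$ supplied by Corollary \ref{mycor}. For $r_0\in\mathbb{N}$ fix once and for all a set $\sigma(r_0)$ of $n+1$ integers all $\ge r_0$ (say $\{r_0,\dots,r_0+n\}$) and put $f(r_0):=\max_{1\le r\le r_0}\bigl(N(\sigma(r))+1\bigr)$, which is non-decreasing by construction. Given a non-decreasing pair $(r_0,r_1)$, set $\sigma:=\sigma(r_0)$ and let $\tau$ consist of the smallest $f(r_0)$ integers that are $\ge r_1$ and avoid $\sigma$. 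Then $\tau\cap\sigma=\emptyset$ and $|\tau|=f(r_0)>N(\sigma)$, so $\tau\notin M^{\sigma}$, i.e.\ $\sigma\cup\tau\notin M$, yielding a cover $(X_j)_{j\in\sigma\cup\tau}$ of $X$ with each $X_j$ of scale-$j$-dimension $0$. Writing $X_0:=\bigcup_{j\in\sigma}X_j$ and $X_1:=\bigcup_{j\in\tau}X_j$ and using the monotonicity remark (each $j\in\sigma$ is $\ge r_0$, each $j\in\tau$ is $\ge r_1$), one checks that $X_0$ has scale-$r_0$-dimension at most $n$ and $X_1$ has scale-$r_1$-dimension at most $f(r_0)-1$, which is precisely what $(n+1,f)$ being a profile requires.

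The routine parts are the monotonicity remark and the index bookkeeping; the step I expect to require the most care is the converse, where $N(\sigma)$ a priori depends on the whole set $\sigma$ rather than on a single scale. This forces one to commit to an explicit, $r_0$-dependent choice $\sigma(r_0)$ before defining $f$, and then to impose monotonicity of $f$ by passing to the running maximum. One should also note that enlarging the dimension bound for $X_1$ from $N(\sigma(r_0))$ to the possibly larger $f(r_0)-1$ is harmless, since a smaller scale-dimension is the stronger property.
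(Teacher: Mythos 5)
Your proof is correct and follows essentially the same route as the paper's: both directions translate the profile condition into statements about $M(X,d)$ via Lemma \ref{car} and Corollary \ref{mycor}, with the same key choices ($r_0=\max\sigma$ and $r_1=\max(\sigma\cup\tau)$ in one direction, $\sigma(r_0)=\{r_0,\dots,r_0+n\}$ and a block of $f(r_0)$ large integers in the other) and the same scale-monotonicity observation. The only difference is cosmetic: where the paper sets $f(k)=\mathrm{Ord}\,M^{\{k,\dots,k+n\}}+1$ and asserts its monotonicity is ``not hard to check,'' you force monotonicity by taking a running maximum, which neatly sidesteps that verification.
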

 
\begin{proof}
Suppose that $(n+1,f)$ is an integral APD of $X$. By definition, for any $r_0\leq r_1$ there exists a decomposition $X=Y_0 \cup Y_1$ such that $Y_0$ further decomposes as $Y_0=X_{1}\cup\dots \cup X_{n+1}$ and $Y_1$ as
$Y_1=X_{n+2} \cup \dots \cup X_{n+f(r_0)+1}$, where each $X_i$ is of scale-$r_0$-dimension $0$ for $1\leq i \leq n+1$ and of scale-$r_1$-dimension $0$ for $n+2\leq i\leq n+f(r_0)+1$.
Take a subset $\sigma\subset\mathbb{N}$ of cardinality $n+1$. List its elements in increasing order: $\sigma=\{a_1,\dots,a_{n+1}\}$. We will show that $\mathrm{Ord}\, M^{\sigma}< f(a_{n+1})<\omega$. Let $\tau=\{b_1,\dots,b_{f(a_{n+1})}\}$ (elements listed in increasing order) be a subset of $\mathbb{N}$ disjoint with $\sigma$. We claim that $\sigma \cup \tau\not\in M$. Let us take $r_0:=a_{n+1}$ and $r_1:=\max\left(a_{n+1},b_{f(a_{n+1})}\right)$. Then the family $(X_i)_{i=1}^{n+f(a_{n+1})+1}$ witnesses that $\sigma \cup \tau\not\in M$. Hence all members of $M^{\sigma}$ have cardinality less than $f(a_{n+1})$. Using Lemma \ref{car} and Corollary \ref{mycor}, we finish one part of the proof.

Suppose $\mathrm{trasdim}\, X \leq \omega+n$. For $k\in\mathbb{N}$ put $f(k):=\mathrm{Ord} \, M^{\{k,\dots,k+n\}} +1$. From Corollary \ref{mycor} we conclude that $f$ takes values in $\mathbb{N}$. It is not hard to check that $f$ is non-decreasing. We claim that $(n+1,f)$ is an integral APD profile of $X$. Fix natural numbers $r_0\leq r_1$. Consider the set $\tau=\{r_0,\dots,r_0 +n,m,m+1,\dots,m+f(r_0)-1\}$, where $m:=\max\left(r_1,r_0+n+1\right)$, and let $\tau'=\{m,m+1,\dots,m+f(r_0)-1\}$. The cardinality of $\tau'$ exceeds $\mathrm{Ord}\, M^{\{r_0,\dots,r_0+n\}}$ so $\tau'\not\in M^{\{r_0,\dots,r_0+n\}}$ and $\tau\not\in M$. The latter means that there exists a decomposition $X=X_{r_0}\cup\dots\cup X_{r_0+n} \cup X_m\cup \dots \cup X_{m + f(r_0)-1}$ such that each $X_i$ has scale-$i$-dimension $0$ for $i\in \tau$. 
In particular, all subspaces $X_{r_0}, \dots, X_{r_0+n}$ have scale-$r_0$-dimension $0$, hence they form one subspace of scale-$r_0$-dimension at most $n$. Similarly, 
the subspaces $X_{m}, \dots, X_{m+f(r_0)-1}$ form one subspace of scale-$r_1$-dimension at most $f(r_0)-1$. Thus $(n+1,f)$ is an APD profile of $X$. 
\end{proof}

\begin{cor}
 An $\infty$-pseudometric space satisfies $\mathrm{trasdim}\, X <\omega+\omega$ if and only if it has an integral APD profile consisting of two functions $(\alpha_0,\alpha_1)$.
\end{cor}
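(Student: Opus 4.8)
The plan is to deduce the corollary directly from Theorem \ref{moje}, since a two-function integral APD profile is, up to bookkeeping, exactly the kind of profile treated there. First I would unpack the definition of an integral APD profile of length two. Writing such a profile as $(\alpha_0,\alpha_1)$, the definition forces $\alpha_0$ to be constant; as its values lie in $\mathbb{N}$ (the positive integers), there is a unique $n\in\omega$ with $\alpha_0\equiv n+1$. Hence a length-two integral APD profile is precisely a pair $(n+1,f)$ with $f:=\alpha_1$ a non-decreasing function $\mathbb{N}\to\mathbb{N}$, which is exactly the datum appearing in Theorem \ref{moje}.

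For the backward implication, suppose $X$ admits an integral APD profile $(\alpha_0,\alpha_1)$. Under the identification above it equals $(n+1,f)$ for some $n\in\omega$, so Theorem \ref{moje} gives at once $\mathrm{trasdim}\,X\leq\omega+n<\omega+\omega$.

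For the forward implication, suppose $\mathrm{trasdim}\,X<\omega+\omega$. The key elementary observation is that every ordinal strictly below $\omega+\omega=\omega\cdot 2$ is dominated by $\omega+n$ for some $n\in\omega$: such an ordinal is either finite, and hence at most $\omega=\omega+0$, or of the form $\omega+n$. Fixing such an $n$ with $\mathrm{trasdim}\,X\leq\omega+n$, Theorem \ref{moje} furnishes an integral APD profile $(n+1,f)$ of $X$, which is a two-function profile as required.

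I do not expect a genuine obstacle, since both directions collapse onto Theorem \ref{moje}. The only points requiring a moment's care are purely order-theoretic and notational: that the ordinals below $\omega+\omega$ are exactly those dominated by the sequence $(\omega+n)_{n\in\omega}$, and that ``$\alpha_0$ constant with values in $\mathbb{N}$'' is the same as ``$\alpha_0\equiv n+1$ for some $n\in\omega$,'' which is what lets a length-two profile be fed into the theorem.
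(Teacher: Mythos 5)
Your proposal is correct and matches the paper's (implicit) intent exactly: the corollary is stated without proof as an immediate consequence of Theorem \ref{moje}, and your argument supplies precisely the routine details --- identifying a two-function profile $(\alpha_0,\alpha_1)$ with $(n+1,f)$ via the constancy of $\alpha_0$ and its values in $\mathbb{N}$, and observing that the ordinals below $\omega+\omega$ are exactly those $\leq\omega+n$ for some $n\in\omega$. Nothing is missing.
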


M. Satkiewicz formulated in \cite{Sat} the {\em omega conjecture} asserting that if $\omega \leq \mathrm{trasdim}\, X <\infty$, then $\mathrm{trasdim}\, X=\omega$. Recently, Y. Wu and J. Zhu (\cite{Wu}) have disproved it constructing a metric space with transfinite asymptotic dimension $\omega +1$. Our theorem implies that this space has an APD profile of the form $(2,f)$ for some $f$ but does not have an APD profile of the form $(1,g)$ for any $g$.

One direction of Theorem \ref{moje} can be generalized as follows:
\begin{tw}\label{great}
Let $n,m\in\omega$, $m>0$. If an $\infty$-pseudometric space $X$ has an integral APD profile $(n+1,\alpha_1,\dots,\alpha_m)$ for some functions $\alpha_1,\dots,\alpha_m$, then $\mathrm{trasdim}\, X \leq m\cdot \omega+n$.
\end{tw}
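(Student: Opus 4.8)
The plan is to deduce the bound directly from Proposition \ref{strat}. Since $\mathrm{trasdim}\, X=\mathrm{Ord}\, M(X,d)$ and $M$ is inclusive, it suffices to exhibit an $m$-strategy $S$ starting at $n+1$ such that $\sigma_0\cup\dots\cup\sigma_m\notin M$ for every pairwise disjoint $(\sigma_0,\dots,\sigma_m)\in S$. The strategy should simply read off the cardinality of each next move from the profile. Concretely, I would set $r_k:=\max(\sigma_0\cup\dots\cup\sigma_k)$ and declare $(\sigma_0,\dots,\sigma_m)\in S$ precisely when $|\sigma_0|=n+1$ and $|\sigma_i|=\alpha_i(r_{i-1})$ for $1\le i\le m$. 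Checking that this is a genuine $m$-strategy in the sense of Definition \ref{st} is routine: condition 1 is immediate, condition 2 holds because $|\sigma_{k+1}|=\alpha_{k+1}(r_k)$ depends only on $\sigma_0,\dots,\sigma_k$, and condition 3 holds because at each level every set of the prescribed cardinality is admitted. One only needs to note that the $\alpha_i$ take values in $\mathbb{N}$, so the prescribed cardinalities are positive and the $\sigma_i$ are legitimate non-empty members of $\mathrm{Fin}\,\mathbb{N}$.

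Next I would fix a pairwise disjoint sequence $(\sigma_0,\dots,\sigma_m)\in S$ and show its union lies outside $M$. Because the unions $\sigma_0\cup\dots\cup\sigma_k$ increase with $k$, the scales satisfy $r_0\le r_1\le\dots\le r_m$, so $(r_0,\dots,r_m)$ is a non-decreasing array of positive integers and the profile applies. This yields a decomposition $X=Y_0\cup\dots\cup Y_m$ in which $Y_0$ has scale-$r_0$-dimension at most $n$ and each $Y_i$ has scale-$r_i$-dimension at most $\alpha_i(r_{i-1})-1$. Consequently $Y_0$ splits into exactly $n+1=|\sigma_0|$ subspaces of scale-$r_0$-dimension $0$, and each $Y_i$ splits into exactly $\alpha_i(r_{i-1})=|\sigma_i|$ subspaces of scale-$r_i$-dimension $0$; the matching of cardinalities here is exactly what the definition of $S$ was engineered to guarantee.

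The crux is then a monotonicity observation: a subspace of scale-$r$-dimension $0$ automatically has scale-$j$-dimension $0$ for every $j\le r$, since scale-$j$-components are contained in scale-$r$-components and hence remain uniformly bounded. Using this, I would fix a bijection between the $|\sigma_i|$ scale-$0$ pieces of $Y_i$ and the elements $j\in\sigma_i$, calling the piece assigned to $j$ by the name $X_j$. Because $j\le\max\sigma_i\le r_i$, each $X_j$ has scale-$j$-dimension $0$. Pairwise disjointness of the $\sigma_i$ makes $(X_j)_{j\in\sigma_0\cup\dots\cup\sigma_m}$ well defined; it covers $X$ and witnesses $\sigma_0\cup\dots\cup\sigma_m\notin M$. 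Proposition \ref{strat} then delivers $\mathrm{Ord}\, M\le m\cdot\omega+n$, as required. This mirrors, and generalizes, the forward direction of Theorem \ref{moje}, where the case $m=1$ was handled by hand with a single auxiliary set $\tau$.

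The step I expect to be the main obstacle is the bookkeeping that ties Definition \ref{st} to the data supplied by the profile. The delicate choice is to take the scales $r_k$ to be the \emph{running maxima} of the accumulated sets rather than $\max\sigma_k$: the running maxima are what force $(r_0,\dots,r_m)$ to be non-decreasing (so the profile is applicable at all), and the very same quantity must simultaneously govern the prescribed cardinality $|\sigma_{k+1}|=\alpha_{k+1}(r_k)$ built into the strategy. Reconciling the scale-$r_i$-dimension $0$ pieces produced by the profile with the scale-$j$-dimension $0$ requirement for the individual labels $j\in\sigma_i$ rests entirely on the monotonicity of scale-dimension in the scale parameter, so I would state and verify that monotonicity carefully before assembling the family $(X_j)$.
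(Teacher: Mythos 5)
Your proposal is correct and follows essentially the same route as the paper's own proof: the same strategy $S$ (with $|\sigma_i|=\alpha_i(r_{i-1})$ and $r_k$ the running maxima), the same appeal to Proposition \ref{strat}, and the same relabelling of the scale-$r_i$-dimension $0$ pieces via monotonicity in the scale parameter. The only difference is cosmetic: you make explicit the monotonicity observation and the verification of the strategy axioms, which the paper leaves as ``easy to check.''
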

\begin{proof}
Define $S\subset (\mathrm{Fin}\,\mathbb{N})^{m+1}$ to consist of all $(\sigma_0,\dots,\sigma_m)\in(\mathrm{Fin}\,\mathbb{N})^{m+1}$ such that $|\sigma_0|=n+1$ and
$|\sigma_k|=\alpha_k \left(\max \bigcup_{i=0}^{k-1} \sigma_i\right)$ for $k=1,\dots,m$. It is easy to check that $S$ is an $m$-strategy starting at $n+1$.
According to Proposition \ref{strat}, it is sufficient to show that for every sequence $(\sigma_0,\sigma_1,\dots,\sigma_m) \in S$ of pairwise disjoint sets the condition $\sigma_0\cup\dots\cup\sigma_m \not\in M(X,d)$ holds.
Fix $(\sigma_0,\sigma_1,\dots,\sigma_m) \in S$ with pairwise disjoint terms. Put $r_k:=\max \bigcup_{i=0}^{k}\sigma_i$ for $k=0,\dots,m$. Obviously $r_0\leq\dots\leq r_m$. Applying the definition of an APD profile, we obtain a decomposition $X=X_0 \cup \dots \cup X_m$ such that $X_k$ is of scale-$r_k$-dimension at most $\alpha_k (r_{k-1})-1=\alpha_k \left(\max \bigcup_{i=0}^{k-1}\sigma_i\right)-1=|\sigma_k|-1$, for $1\leq k\leq m$, and at most $n$ for $k=0$. Thus, each $X_k$ decomposes further as the union of $|\sigma_k|$ subspaces, each of them being of scale-$r_k$-dimension $0$. We can write this decomposition as $X_k=\bigcup_{j\in\sigma_k} X_{k,j}$.
Since $r_k=\max \bigcup_{i=0}^{k}\sigma_i$, each $X_{k,j}$ is in particular of scale-$j$-dimension $0$. Combining all the $X_k$, we get a decomposition 
$X=\bigcup_{i\in\sigma_0\cup \dots \cup \sigma_m} X_i$ where every $X_i$ is of scale-$i$-dimension $0$. That means that $\sigma_0 \cup \dots \cup \sigma_m \not\in M(X,d)$, as desired.
\end{proof}

We have the following
\begin{cor}
If an $\infty$-pseudometric space $X$ has asymptotic property D, then $\mathrm{trasdim}\, X < \omega\cdot\omega$.
\end{cor}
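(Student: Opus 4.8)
The plan is to read the corollary as an immediate repackaging of Theorem \ref{great}. By definition, asymptotic property D means that $X$ admits an APD profile, and since throughout we agreed to work only with integral profiles, this is an integral APD profile $(\alpha_0,\alpha_1,\dots,\alpha_m)$ for some $m\in\omega$ and non-decreasing functions $\alpha_i\colon\mathbb{N}\to\mathbb{N}$, with $\alpha_0$ constant. As $\alpha_0$ takes values in $\mathbb{N}$, its constant value is of the form $n+1$ for some $n\in\omega$, so the profile can be written $(n+1,\alpha_1,\dots,\alpha_m)$, exactly the shape required by Theorem \ref{great}.

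First I would isolate the degenerate case $m=0$, in which Theorem \ref{great} does not apply. Here the profile is the single constant $(n+1)$, and it is harmless to extend it to the integral APD profile $(n+1,\alpha_1)$ with $\alpha_1\equiv 1$: for any $r_0\leq r_1$ one takes the trivial decomposition $X=X_0\cup X_1$ with $X_0=X$ (which the length-one profile presents with scale-$r_0$-dimension at most $n$) and $X_1=\emptyset$, so that $X_1$ has scale-$r_1$-dimension $0\leq\alpha_1(r_0)-1$. This reduces everything to the case $m\geq 1$. Alternatively, one checks directly that a length-one profile $(n+1)$ forces every $\sigma\in M(X,d)$ to satisfy $|\sigma|\leq n$, whence $\mathrm{trasdim}\,X\leq n$ by Lemma \ref{car}.

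For $m\geq 1$ I would apply Theorem \ref{great} to the profile $(n+1,\alpha_1,\dots,\alpha_m)$ and obtain $\mathrm{trasdim}\,X\leq m\cdot\omega+n$. It then remains only to record the elementary ordinal inequality $m\cdot\omega+n<\omega\cdot\omega$, which follows from $m\cdot\omega+n<m\cdot\omega+\omega=(m+1)\cdot\omega\leq\omega\cdot\omega$ since $m,n\in\omega$. Combining the two cases gives $\mathrm{trasdim}\,X<\omega\cdot\omega$ in all situations.

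I do not expect a genuine obstacle, since the corollary is purely a matter of invoking Theorem \ref{great} and performing ordinal bookkeeping; the only points demanding a little care are recognizing the constant first term of the profile as $n+1$ and disposing of the short profile $m=0$, where the hypothesis $m>0$ of Theorem \ref{great} fails.
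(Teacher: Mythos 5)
Your proposal is correct and is essentially the paper's own (implicit) argument: the corollary is stated as an immediate consequence of Theorem \ref{great}, obtained by writing the constant first term of an integral APD profile as $n+1$, applying the theorem, and noting $m\cdot\omega+n<\omega\cdot\omega$. Your separate treatment of the length-one profile (the case $m=0$, where the theorem's hypothesis $m>0$ fails) is a sensible extra precaution that the paper glosses over, but it does not change the route.
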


The strategy $S$ constructed while proving Theorem \ref{great} is rather special because the cardinality of $\sigma_k$ is determined somewhat ``uniformly'' by given $\alpha_k$ (taking an expression of previous $\sigma_0,\dots,\sigma_{k-1}$ as its argument). In general, a strategy in Definition \ref{st} does not provide such a uniform rule, it is merely a whole {\em strategy tree} of the game in which we react with a natural number to a given sequence of subsets (and such a reaction could depend very ``wildly'' on a current position).

\begin{q}
Is there a space $X$ with $\mathrm{trasdim}\, X <\omega\cdot\omega$ but without asymptotic property D?
\end{q}

If the answer were affirmative, there would be a space with asymptotic property C but without aymptotic property D (thus responding the question in \cite[5.15]{Dydak}).

\end{document}